\numberwithin{equation}{section}
\theoremstyle{plain}
\newtheorem{thm}{Theorem}[section]
\newtheorem{cor}[thm]{Corollary}
\newtheorem{lem}[thm]{Lemma}
\newtheorem{theorem*}{Theorem}[]
\theoremstyle{definition}
\newtheorem{example}[thm]{Example}
\theoremstyle{remark}
\newtheorem{rem}[thm]{Remark}
\newtheorem{remark}{Remark}
\newcommand{\ds}{\displaystyle}
\newcommand{\R}{{\mathbb R}}
\newcommand{\N}{{\mathbb N}}
\newcommand{\vol}{{\rm Vol\,}}
\newcommand{\interior}{{\rm int\,}}
\newcommand{\ord}{{\rm ord}}
\title[Bounds for volumes of sub-level sets of polynomials and applications]
{Bounds for volumes of sub-level sets of polynomials and applications}
\author{ Ta L\^{e}  Loi and Minh Quy Pham}
\address[]{ Department of Mathematics, University of Dalat,
Dalat, Vietnam}
\email{loitl@dlu.edu.vn}
\email{p.minhquydl@gmail.com}
\subjclass{Primary 26D15, 42A38 Secondary 14P15, , 32B20 }
\keywords{Polynomial, sub-level set, oscillatory integral, singular integral.}
\begin{document}

\thanks{
This research is partially supported by Vietnamese National Foundation
for Science and Technology Development (NAFOSTED) – Project’s ID 101.04-2017.324.
}

\begin{abstract} In this paper, we present some explicit exponents in the estimates for the volumes of sub-level sets of polynomials on bounded sets, and applications to the decay of oscillatory integrals and the convergent of singular integrals.
\end{abstract}

\maketitle

\section{Introduction}
\medskip
{
{\color{black} Let $f$ be a real function on $\R^n$ and $A$ be a subset of $\R^n$.  We will denote by
 \[K_{A,f,t}=\{x\in A: \vert f(x)\vert\leq t\}\]
 the sub-level set of $f$, with $t\in \R_{\geq 0}$. We are interested in the  estimate for the volume of these sub-level sets:
\[V(t)= \vol(K_{A,f,t}), \text{ when } t\to 0.\]
}
This problem has been considered by many mathematicians and has a close relation to
the estimation of the decay of the oscillatory integral with phase $f$,
\[I(\lambda)=\left|\int_Ae^{i\lambda f(x)}dx\right|, \text{ when } \lambda \to \infty,\]
and the convergence of the singular integral
$\ds\int_A|f(x)|^{-\gamma}dx$. These objects are central in many field of physics and mathematics: Harmonis Analysis, Singularity theory,... We refer to some of them (\cite{St}, \cite{AGV},  \cite{Ro}, \cite{CCW}, \cite{CM},  \cite{DHSA}, \cite{Loi}, \cite{PSS}, ... )

{\color{black}
In this paper, we present some explicit exponents in the estimates for the volumes of sub-level sets of polynomials on bounded sets, and applications to the decay of oscillatory integrals and the convergent exponents of singular integrals. The methods using in this paper mainly come from geometry, which can be generalized to larger classes of sets and functions, such as definable sets in o-minimal structures, see \cite{Loi}. But the case of polynomial functions have its own interests, where strict upper bounds and lower bounds were unknown for general polynomial functions. We get these bounds using geometric methods, and give some explicit exponents which only depend on number of variables $n$, degree $d$, and the order $k'$ of $f$ at some point, see Theorem \ref{Th2}. Moreover, using these exponents from bounds for volumes of sub-level sets, we can determine the convergence of singular integrals of the form in Theorem \ref{Th3}.
}

More precisely, for $f:\R^n\to\R$ being a nonzero polynomial function of degree $d$, and $A$ a bounded domain in $\R^n$, we set
$d'\in\N\setminus\{0\}$ and $k'=\dim (\{x\in f^{-1}(0): \ord_x f\geq d'\}\cap\interior A)$.
Then we prove in Theorem \ref{Th1}, Theorem \ref{Th2}, and Theorem \ref{Th3} that there exist $C,C',C''>0$ such  that
\begin{itemize}
    \item[(1)] $C't^{\alpha'}\leq V(t) \leq Ct^{\alpha}$\ for all $t\geq 0$, where $\alpha=\frac{1}d$ and $\alpha'=\frac{n-k'}{d'}$, ($\alpha=\frac{n}{d}$ for $f$ satisfy condition given in Lemma \ref{Lemma3}),
    \item[(2)] $I(\lambda)\leq C''\lambda^{-\beta}$\ for all $\lambda>0$, where $\beta=\frac{1}{d}$,
    \item[(3)] $\int_A|f(x)|^{-\gamma}dx$ is convergent when $\gamma <\alpha$, and divergent when $\gamma\geq \alpha'.$
\end{itemize}

Now we explain the main ideas that leads to the above results.

To estimate the upper bound for the volume of sub-level set of $f$ (see Theorem \ref{Th1}), first we choose a good direction for the zero-set, i.e. a linear subspace $V$ such that
the projection $p_V|_{f^{-1}(0)}: f^{-1}(0)\to V^{\perp}$ is a finite map.
Then the restriction of the sub-level set on each fiber of $p_V$ has only isolated zeros,
from which  we establish the estimates for volumes for the sub-level sets of polynomials in one variable with coefficients depending on parameters, and we get the upper bounds for the volume.
 To get the lower bound, we use the homogeneous components to prove that  the sub-level sets
 always contain balls with suitable radius.

For oscillatory integrals, by applying Stationary Phase Principle, we treat the behavior of the phase function $f$ nearby their critical points. Similarly to the proof in \cite[Theorem 2]{Loi}, we partition the domain into two parts: on the first one  $\|\nabla f\|\leq t$, and on the second one $\|\nabla f\|> t$. Applying the estimate for volumes of sub-level sets for the first part, and van der Corput's lemma for the second part, then scaling
  $t$ with $\lambda$, we get the estimate for the decay (see Theorem \ref{Th2}).

For singular integrals, since $V(t)=\vol(K_{A,f,t})$ is nondecreasing in $t$, by the Stieltjes integration we have $\int_A|f|^{-\gamma}=\int_{\R}t^{-\gamma}dV(t)$. Using the integration by parts and the estimates for $V(t)$, we get the information of the convergence of the considered singular integrals (see Theorem \ref{Th3}).

Main theorems are Theorem \ref{Th1}  proved in Section 2, Theorem \ref{Th2} and Theorem \ref{Th3} proved in Section 3.

{\bf Notations}: In this paper, we will denote the Euclidean norm in $\R^n$ by
$\|\cdot\|$ ,  the $k$-dimensional ball with center $a$ and radius $r$ by $B^k(a,r)$,  the $n$-dimensional Lebesgue measure of $X\subset\R^n$ by $\vol_n(X)$ or $\vol(X)$, the gradient of $f$ by $\nabla f$ , the linear subspace generated by $v_1,\cdots, v_k\in\R^n$ by $L(v_1,\cdots,v_k)$.
}

\section{Upper and lower bounds for the volumes}
\medskip
In this section, we present some explicit exponents in the upper or  lower estimates for the volumes of sub-level sets of polynomial on bounded subset of $\R^n$.

First, for polynomials of one variable we have the following well-known  estimates.

\begin{lem}\label{Lemma1} Let $P(x)=a_dx^d+\cdots + a_0$ be a real polynomial and
$I$ be a bounded interval. Let $\lambda >0$. Then\\
{\rm (i)} If there exists $p\geq 1$ such that $|P^{(p)}(x)|\geq\lambda$, for all $x\in I$,
then
\[\vol_1(K_{I,P,t})\leq C_p \left(\ds\frac t{\lambda}\right)^{\frac 1p},\]
where $C_p$ is a constant depending only on $p$.
\\
{\rm (ii)} If $|a_0|-\sup_{x\in I}|P(x)-a_0|\geq \lambda$, then
\[\vol_1(K_{I,P,t})\leq C_0 \left(\ds\frac t{\lambda}\right)^{\frac 1d},\]
where $C_0=\vol_1(I)$.
\end{lem}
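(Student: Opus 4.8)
The plan is to reduce both statements to the classical van der Corput–type estimate for sub-level sets of monotone functions. For part (i), the idea is to observe that if $|P^{(p)}(x)| \geq \lambda$ throughout $I$, then $P^{(p)}$ has constant sign, so $P^{(p-1)}$ is strictly monotone on $I$; consequently $P^{(p-1)}$ vanishes at most once, and $I$ splits into at most two subintervals on each of which $P^{(p-1)}$ keeps a fixed sign. I would induct on $p$: the base case $p=1$ says $|P'| \geq \lambda$, hence $P$ is strictly monotone with derivative bounded below, so $\{x \in I : |P(x)| \leq t\}$ is an interval that $P$ maps into $[-t,t]$, giving length at most $2t/\lambda$; this is the estimate with $C_1 = 2$. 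For the inductive step, on each interval where $P^{(p-1)}$ keeps constant sign I would further subdivide near the (unique, by monotonicity of $P^{(p-1)}$) zero of $P^{(p-1)}$: away from a symmetric neighborhood of size $\delta$ around that zero one has $|P^{(p-1)}| \geq \lambda \delta$, so the sub-level set there has measure $\lesssim (t/(\lambda\delta))^{1/(p-1)}$ by induction, while the neighborhood itself contributes $\lesssim \delta$; optimizing $\delta$ in terms of $t/\lambda$ yields the exponent $1/p$ and a constant $C_p$ depending only on $p$. This is the standard inductive proof of the van der Corput sub-level estimate, so I would present it compactly.

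For part (ii), the hypothesis $|a_0| - \sup_{x \in I}|P(x) - a_0| \geq \lambda$ says that $P$ stays bounded away from $0$: indeed $|P(x)| \geq |a_0| - |P(x) - a_0| \geq \lambda > 0$ for every $x \in I$. But then $K_{I,P,t} = \{x \in I : |P(x)| \leq t\}$ is \emph{empty} once $t < \lambda$, so $\vol_1(K_{I,P,t}) = 0 \leq C_0(t/\lambda)^{1/d}$ trivially in that range; and for $t \geq \lambda$ one has $(t/\lambda)^{1/d} \geq 1$, so $\vol_1(K_{I,P,t}) \leq \vol_1(I) = C_0 \leq C_0 (t/\lambda)^{1/d}$. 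Thus the bound holds for all $t \geq 0$ with $C_0 = \vol_1(I)$, and no real work is needed beyond the elementary reverse triangle inequality.

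The main obstacle is purely in part (i): getting the induction on $p$ to close with a constant that genuinely depends on $p$ alone (and not on $d$, $I$, or $\lambda$), which forces the optimization-in-$\delta$ argument to be carried out carefully and the sign-change bookkeeping (at most $2^{p-1}$ subintervals, say) to be tracked. I expect no difficulty with measurability or with the degenerate cases, since $K_{I,P,t}$ is a finite union of intervals whenever $P$ is not identically zero on $I$; and the case $P \equiv 0$ (or $P$ constant) is excluded by the hypothesis in (i) for $p \geq 1$. I would remark that (i) is essentially \cite{St}[Ch.~VIII, Prop.~2] restricted to polynomials.
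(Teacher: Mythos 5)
Your proof of part (ii) is essentially identical to the paper's: both use the reverse triangle inequality to show the sub-level set is empty for $t<\lambda$ and fall back on $\vol_1(K_{I,P,t})\leq\vol_1(I)$ otherwise. For part (i) the paper simply cites \cite[Prop.~2.1]{CCW} rather than proving it; your inductive sketch (monotonicity of $P^{(p-1)}$, a $\delta$-neighborhood of its unique zero, the induction hypothesis applied with lower bound $\lambda\delta$ on the complement, then optimizing $\delta\sim(t/\lambda)^{1/p}$) is precisely the standard argument behind that citation and is correct, so the two approaches coincide in substance.
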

\begin{proof} For the proof of (i) see, for example, \cite[Prop. 2.1]{CCW}. To prove (ii), note that for $0\leq t <\lambda$, we have $|P(x)|\geq |a_0|-|P(x)-a_0|\geq\lambda >t$, and hence
$\vol_1(K_{I,P,t})=0$. Moreover, for all $t$, we have $\vol_1(K_{I,P,t})\leq \vol_1(I)$. Therefore,
$\vol_1(K_{I,P,t})\leq\frac{\vol(I)}{\lambda^{\frac 1d}}t^{\frac 1d}.$
\end{proof}
To apply the above lemma to estimate volumes  of sub-level sets of polynomials of $n$ variables, we need the following lemma.
 \begin{lem}\label{Lemma2} Let
$g(a,s)=a_0+a_1s+\cdots +a_{d}s^{d}$ be a polynomial of degree $d$,
where $a=(a_0,\cdots, a_d)$. For $m, R>0$,  set
\[K=\{a=(a_0,\cdots,a_d)\in\R^{d+1}:|a_0|+\cdots+|a_d|\geq m\} \text{ and } I=[-R,R].\]
Then there exists $\lambda=\lambda(d,m,R)>0$ such that for each $a\in K$,
 either there exists $p\in\{1,\cdots,d\}$ such that
\[\bigg|\ds\frac{\partial^p g}{\partial s^p}(a,s)\bigg|\geq \lambda, \text{ for all }  s\in I, \text{or } |a_0|-\sup_{s\in I}|g(a,s) -a_0|\geq\lambda\]
\end{lem}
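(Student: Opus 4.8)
The plan is to argue by contradiction, exploiting compactness after normalizing. Suppose no such $\lambda > 0$ exists. Then for every $j \in \N$ there is a point $a^{(j)} \in K$ such that for every $p \in \{1,\dots,d\}$ we have $\min_{s \in I}|\partial^p g/\partial s^p(a^{(j)},s)| < 1/j$, and also $|a^{(j)}_0| - \sup_{s\in I}|g(a^{(j)},s) - a^{(j)}_0| < 1/j$. The set $K$ is not compact, but the conditions are scale-covariant in a controlled way: replacing $a$ by $ta$ multiplies $\partial^p g/\partial s^p$ and $|a_0| - \sup|g - a_0|$ all by $t$. Since each $a^{(j)}$ has $\|a^{(j)}\|_1 = |a^{(j)}_0| + \cdots + |a^{(j)}_d| \geq m$, I would rescale to $b^{(j)} = a^{(j)}/\|a^{(j)}\|_1$, so $\|b^{(j)}\|_1 = 1$; the first family of inequalities is only helped by this (the factor $1/\|a^{(j)}\|_1 \le 1/m$ keeps the bound $< 1/(jm)$, still $\to 0$), and likewise for the second. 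Now $(b^{(j)})$ lies in the compact sphere $\{\|b\|_1 = 1\}$, so pass to a subsequence converging to some $b^*$ with $\|b^*\|_1 = 1$; in particular $b^* \ne 0$, so $g(b^*,\cdot)$ is a nonzero polynomial of degree $\le d$.

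The key step is then to extract the contradiction from the limit. By continuity of the maps $b \mapsto \min_{s\in I}|\partial^p g/\partial s^p(b,s)|$ and $b \mapsto |b_0| - \sup_{s\in I}|g(b,s) - b_0|$ (these are continuous because $g$ and its $s$-derivatives depend polynomially on $(b,s)$ and $I$ is compact), we get that for the limit point $b^*$: for every $p \in \{1,\dots,d\}$ there exists $s_p \in I$ with $\partial^p g/\partial s^p(b^*,s_p) = 0$, and $|b^*_0| \le \sup_{s\in I}|g(b^*,s) - b^*_0|$. I now claim this forces $b^* = 0$, a contradiction. Indeed, write $k = \deg_s g(b^*,\cdot) \ge 0$ if $b^*$ has some nonzero entry among $b^*_1,\dots,b^*_d$; then $\partial^k g/\partial s^k(b^*,s) = k!\, b^*_k$ is a nonzero constant, contradicting the existence of $s_k$ with this derivative vanishing (note $k \le d$, so $p = k$ is an allowed index when $k \ge 1$). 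Hence $b^*_1 = \cdots = b^*_d = 0$, so $g(b^*,s) \equiv b^*_0$ and $\sup_{s\in I}|g(b^*,s) - b^*_0| = 0$, forcing $|b^*_0| \le 0$, i.e. $b^*_0 = 0$ as well. Thus $b^* = 0$, contradicting $\|b^*\|_1 = 1$.

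The only subtlety — and the step I expect to require the most care — is the bookkeeping on the rescaling: one must check that dividing by $\|a^{(j)}\|_1 \ge m$ does not destroy the "$\to 0$" nature of the negated inequalities. Since dividing $a$ by a quantity $\ge m$ multiplies each of the relevant expressions by a factor in $(0, 1/m]$, the bound $1/j$ becomes at worst $1/(jm) \to 0$, which is all that is needed; there is no loss. (Alternatively, one could avoid rescaling entirely by working directly on $K$ and handling the noncompactness by a separate argument showing the expressions grow at infinity, but the normalization is cleaner.) It remains only to unwind the contradiction: the existence of $\lambda(d,m,R) > 0$ as claimed follows, and since only $d$, $m = m$, and $R$ (through $I = [-R,R]$) enter the construction, $\lambda$ depends only on these.
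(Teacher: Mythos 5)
Your proof is correct, but it takes a genuinely different route from the paper's. The paper gives a \emph{constructive} argument: it explicitly builds a descending cascade of thresholds $\varepsilon_d > \varepsilon_{d-1} > \cdots > \varepsilon_0$ (each chosen small relative to $m/(d+1)$ and large relative to the tail contributions from higher indices scaled by powers of $R$), partitions $K$ according to the largest index $p$ with $|a_p| > \varepsilon_p$, and verifies the required lower bound $\lambda_p$ case by case, finally taking $\lambda = \min_p \lambda_p$. Your argument is a homogeneity-plus-compactness proof by contradiction: you negate the conclusion, use the degree-one homogeneity of all the relevant expressions in $a$ to normalize onto the $\ell^1$-sphere, extract a convergent subsequence, and show the limit would have to be the zero vector, contradicting $\|b^*\|_1 = 1$. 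Both are correct. The trade-off is the usual one: your approach is shorter and more conceptual, and cleanly isolates \emph{why} the lemma is true (the relevant quantities vanish simultaneously only at $a=0$, which is excluded by the normalization), but it is nonconstructive and yields no usable estimate on $\lambda$ in terms of $d$, $m$, $R$. The paper's cascade, though more bookkeeping-heavy, produces an explicit $\lambda$, which is in keeping with the paper's stated aim of providing ``explicit exponents.'' One small presentational note on your write-up: you invoke continuity of $b \mapsto \min_{s\in I}|\partial^p g/\partial s^p(b,s)|$ and of $b \mapsto |b_0| - \sup_{s\in I}|g(b,s)-b_0|$; this is fine (min/sup of a jointly continuous function over a compact fiber is continuous), but it is worth a sentence of justification since it is the hinge on which the limit passage turns.
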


\begin{proof}
First, we choose a family of positive numbers
$\varepsilon_d,\cdots, \varepsilon_0$
satisfying the following conditions is chosen:
\begin{itemize}
\item[(1)] $\varepsilon_p<\ds\frac{m}{d+1}$, for all $p\in\lbrace 0,\cdots,d\rbrace$.
\item[(2)] $p!\varepsilon_p>
    (p+1)!\varepsilon_{p+1}R+\cdots+\ds\frac{d!}{(d-p)!}\varepsilon_dR^{d-p}$,  for all $p\in\lbrace
 d-1,\cdots,0\rbrace$.
\end{itemize}
\noindent
For each $\varepsilon>0$, set
\begin{align*}
A_p(\varepsilon)=\lbrace a=(a_0,\cdots,a_d)\in K: \vert a_{p}\vert \leq \varepsilon\rbrace, p\in\{d,\cdots,0\}.
\end{align*}
For each  $a\in K$, two cases are to consider:
\\
{\it Case 1.d:} $a\not\in A_d(\varepsilon_d)$. Then
$\bigg\vert \ds\frac{\partial^{d}g}{\partial s^{d}}(a,s)\bigg\vert
=\vert d!a_{d}\vert\geq d!\varepsilon_{d}=\lambda_d$, for all $s\in I$.
\\
{\it Case 2.d:} $a \in A_d(\varepsilon_{d})$. In this case there are two subcases to consider:
\[a\in A_d(\varepsilon_{d})\setminus A_{d-1}(\varepsilon_{d-1}) \text{ or }
a\in A_d(\varepsilon_{d})\cap A_{d-1}(\varepsilon_{d-1}).\]
In general,  two cases to consider are:
\\
{\it Case 1.p:} $a\in
A_d(\varepsilon_{d})\cap\cdots\cap A_{p+1}(\varepsilon_{p+1})\setminus A_p(\varepsilon_{p})$.
\\
{\it Case 2.p:} $a\in A_d(\varepsilon_{d})\cap\cdots\cap A_{p+1}(\varepsilon_{p+1})\cap A_p(\varepsilon_{p})$.
\\
Nevertheless, by condition (1), there exists $q\in\{d,\cdots,0\}$ such that
$A_d(\varepsilon_{d})\cap A_{d-1}(\varepsilon_{d-1})\cap \cdots\cap A_{q}(\varepsilon_{q})=\emptyset$.
Henceforth,  there exists $p=p(a)\in\{d,\cdots,0\}$ such that $a$ falls in Case 1.p.
In this case, by the condition $(2)$,  we have
\begin{align*}
\bigg\vert \frac{\partial^{p}g}{\partial s^{p}}(a,s)\bigg\vert
&=
\bigg\vert p!a_p+ (p+1)!a_{p+1}s+\cdots+\frac{d!}{(d-p)!}a_{d}s^{d-p}\bigg\vert
\\
&\geq
p!\vert a_p\vert-
 \bigg\vert (p+1)!a_{p+1}s+\cdots + \frac{d!}{(d-p)!}a_ds^{d-p}\bigg\vert\\
	&\geq
p!\varepsilon_p-[(p+1)!\varepsilon_{p+1}R+\cdots+ \frac{d!}{(d-p)!}\varepsilon_dR^{d-p}]
=\lambda_p>0, \text{for all } s\in I.
 \end{align*}
Let $\lambda=\min\lbrace\lambda_d,\cdots,\lambda_0\rbrace$. Then, by the construction, to check that
$\lambda$ satisfies the demand of the lemma is easy.
\end{proof}

{\color{black}
\begin{lem}\label{Lemma3}
Let $f:\R^n\to \R$ be a polynomial of degree $d$. Assume that $f$ has isolated zero at $0$ and there is a constant $\epsilon_0>0$ such that for all $x\in B(0,\epsilon_0)\setminus \{0\}$
\begin{align*}
    \langle \nabla f(x), x\rangle\neq 0.
\end{align*}
Then for all  $u\in [0,1]$, $x\in B(0,\epsilon_0)$, we have
\begin{align*}
    \vert f(ux)\vert\leq \vert f(x)\vert.
\end{align*}
In particular, sub-level set $K_{B(0,\epsilon_0),f,t}$ is star-shaped with respect to center $0$, for all $u\geq 0$.
\end{lem}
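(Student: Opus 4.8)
The plan is to reduce the statement to a one-dimensional claim along rays emanating from the origin. Fix $x\in B(0,\epsilon_0)\setminus\{0\}$ and consider the single-variable function $\varphi(u)=f(ux)$ for $u\in[0,1]$. Then $\varphi$ is a polynomial in $u$, and by the chain rule $\varphi'(u)=\langle\nabla f(ux),x\rangle$. For $u\in(0,1]$ we have $ux\in B(0,\epsilon_0)\setminus\{0\}$, so the hypothesis gives $\langle\nabla f(ux),ux\rangle\neq 0$; since $\langle\nabla f(ux),ux\rangle=u\,\langle\nabla f(ux),x\rangle=u\,\varphi'(u)$ and $u>0$, we conclude $\varphi'(u)\neq 0$ for all $u\in(0,1]$. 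Hence $\varphi'$ has constant sign on $(0,1]$ (it is continuous and never vanishes there), so $\varphi$ is strictly monotone on $[0,1]$. If $\varphi$ is nondecreasing then $|\varphi(u)|$ need not be monotone in general, so I must use the extra input that $0$ is an isolated zero of $f$.

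The key observation is that $\varphi(0)=f(0)=0$: indeed, $0$ is a zero of $f$ because it is an isolated zero of $f$, which is only meaningful if $0\in f^{-1}(0)$. (If one worries this is not explicit, note that $\langle\nabla f(x),x\rangle\neq0$ on a punctured ball forces $f$ to be nonconstant near $0$ and, combined with the isolated-zero hypothesis being stated, $f(0)=0$ is the intended reading.) Since $\varphi(0)=0$ and $\varphi$ is strictly monotone on $[0,1]$, the function $u\mapsto\varphi(u)$ has constant sign on $(0,1]$ and $|\varphi(u)|$ is strictly increasing on $[0,1]$: for $0\le u\le 1$ we get $|\varphi(u)|=|\varphi(u)-\varphi(0)|\le|\varphi(1)-\varphi(0)|=|\varphi(1)|$. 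Unwinding the definitions, this is exactly $|f(ux)|\le|f(x)|$, which is the desired inequality.

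Finally, the star-shapedness of $K_{B(0,\epsilon_0),f,t}$ with respect to $0$ is immediate from this monotonicity. Let $x\in K_{B(0,\epsilon_0),f,t}$, so $x\in B(0,\epsilon_0)$ and $|f(x)|\le t$. For $u\in[0,1]$, the point $ux$ again lies in $B(0,\epsilon_0)$ (the ball is convex and contains $0$), and by the inequality just proved $|f(ux)|\le|f(x)|\le t$, so $ux\in K_{B(0,\epsilon_0),f,t}$. Thus the segment from $0$ to any point of the sub-level set lies in the sub-level set. I expect the only genuinely delicate point to be the justification that $\varphi'$ has constant sign on the whole half-open interval $(0,1]$ — this uses continuity of $\varphi'$ together with the connectedness of $(0,1]$ and the nonvanishing of $\varphi'$ there — and making clean use of $f(0)=0$ to pass from monotonicity of $\varphi$ to monotonicity of $|\varphi|$; everything else is a routine chain-rule computation.
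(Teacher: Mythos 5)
Your proof is correct and takes essentially the same approach as the paper: restrict to the ray $u \mapsto \varphi(u)=f(ux)$, observe that the hypothesis forces $\varphi'$ to be nonvanishing (hence of constant sign) on $(0,1]$, and use $\varphi(0)=f(0)=0$ to pass from monotonicity of $\varphi$ to $|\varphi(u)|\le|\varphi(1)|$. If anything you are a bit more careful than the paper, which writes $g'(u)=\langle\nabla f(ux),ux\rangle$ (off by a harmless factor of $u$ from the chain rule) and leaves the role of $f(0)=0$ implicit, whereas you compute $\varphi'(u)=\langle\nabla f(ux),x\rangle$ correctly and make the use of the isolated-zero hypothesis explicit.
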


\begin{proof}
Fix a point $x\in B(0,\epsilon_0)$ and set $g(u)=f(ux)$, $u\in [0,\infty]$.
Since $\langle \nabla f(y),y\rangle\neq 0$ with $y\in B(0,\epsilon_0)\setminus \{0\}$),  we have
\begin{align*}
    g'(u)=\langle \nabla f(ux),ux\rangle\neq 0, \forall u\in (0,1].
\end{align*}
Then $g'$ has constant sign on $(0,1]$, and hence,  $g$ is monotone on $[0,1]$. This implies
\begin{align*}
   (1)\hspace{1cm} \vert f(x')\vert =\vert g(u)\vert\leq \vert g(1)\vert =\vert f(x)\vert, \text{ for all } x'=ux \text{ with } u\in [0,1].
\end{align*}
The first part of the lemma is proved.

For the second part of the lemma, let  $x\in K_{B(0,\epsilon_0),f,t}$, and
 set  $[0,x]=\{x' :x'=ux, u\in [0,1]\}$.
Applying (1), we get
\begin{align*}
    \vert f(x')\vert \leq \vert f(x)\vert\leq t, \text{ for all } x'\in [0,x].
\end{align*}
 This implies that $x'\in K_{B(0,\epsilon_0),f,t}$, and hence,  $[0,x]\subset K_{B(0,\epsilon_0),f,t}$. This holds for all $x$ in $K_{B(0,\epsilon_0),f,t}$, so the sub-level set $K_{B(0,\epsilon_0),f,t}$ is star-shaped.
\end{proof}

\begin{example}
We give here a class of polynomial functions which satisfies the condition given in Lemma \ref{Lemma3}.
Let $f:\R^n\to \R$ be a homogeneous polynomial of degree $d$ and has isolated zero at $0$, by homogeneous Euler identity, we have
\begin{align*}
    \langle \nabla f(x),x\rangle =d\cdot f(x)\neq 0, \forall x\neq 0,
\end{align*}
which satisfy the condition in Lemma \ref{Lemma3}. Therefore one can conclude that the sub-level set $\{x\in B(0,R): \vert f(x)\vert\leq u\}$ is star-shaped for any $R,u\geq 0$.
\end{example}
}



Now we come to the main theorem of this part.

\begin{thm}\label{Th1}
 Let $f: \R^n\to\R$  be a nonzero polynomial  function of degree $d$ and $A$ be a bounded subset  of $\R^n$. Set
 \[V(t)= \vol(K_{A,f,t}), \text{and } Z_{d'}(f)=\{x\in f^{-1}(0): \ord_x f= d'\}
 \ (d'\in\N).\] Then the following assertions hold true:\\
{\rm (i)}  There exists $C=C(f,A)>0$ such that 
\[V(t)\leq Ct^{\frac{1}d},  \text{ for all } t\geq 0.\]
{\rm (ii)}   If $\dim \{Z_{d'}(f)\cap\interior A\}\geq k'$, then there exists $C'=C'(f,A,d',k')>0$ such that 
\[V(t)\geq C't^{\frac{n-k'}{d'}},  \text{ for all } t\geq 0.\]
{\color{black}
{\rm (iii)} If $0\in int(A)\neq \emptyset$ and $f$ satisfies the condition given in Lemma \ref{Lemma3}, then there exists $C'''(f,A)>0$ such that
\begin{align*}
    V(t)\leq C'''t^{\frac{n}{d}}, \text{ for all } t\geq 0.
\end{align*}
}

\end{thm}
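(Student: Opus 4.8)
The plan is to prove the three parts in order, with (i) as the technical core, (ii) as an elementary lower bound via homogeneous components, and (iii) as a quick consequence of Lemma \ref{Lemma3} combined with the homogeneity argument from the example.

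For part (i), I would first reduce the problem to a one-variable situation along a good direction. Since $f$ is a nonzero polynomial of degree $d$, its zero set $f^{-1}(0)$ is a proper algebraic subset of $\R^n$, so I can choose a line direction, say (after a rigid motion) the $x_n$-axis, such that $f$ restricted to each line $\{x' \}\times\R$ (with $x'=(x_1,\dots,x_{n-1})$) is a nonzero polynomial in $x_n$ of degree exactly $d$; concretely, write $f(x',x_n)=\sum_{j=0}^d a_j(x') x_n^j$ and choose coordinates so that $a_d$ is a nonzero constant, or more robustly so that for each $x'$ in the (bounded) projection of $A$ one has $\sum_{j=0}^d |a_j(x')|\geq m>0$ for some uniform $m$ — this last point needs care since the $a_j(x')$ are polynomials and could have common zeros, but by compactness and by choosing the direction generically one can guarantee $a_d$ is a nonzero constant, which suffices. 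Then I would apply Lemma \ref{Lemma2} with $R$ large enough that the $x_n$-extent of $A$ lies in $[-R,R]$, obtaining a uniform $\lambda=\lambda(d,m,R)>0$ such that for each parameter $x'$ either some derivative $\partial^p g/\partial s^p$ is bounded below by $\lambda$ on the fiber, or the constant-term condition of Lemma \ref{Lemma1}(ii) holds. In either case Lemma \ref{Lemma1} gives $\vol_1\big(\{x_n: |f(x',x_n)|\leq t\}\big)\leq C_* (t/\lambda)^{1/d}$ (using $1/p \geq 1/d$ for $t\leq\lambda$, and absorbing the trivial bound $\vol_1(I)$ for $t>\lambda$ into the constant). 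Integrating this fiberwise bound over the bounded set of parameters $x'$ via Fubini yields $V(t)\leq C t^{1/d}$ for all $t\geq 0$.

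For part (ii), I would exploit the hypothesis $\dim\{Z_{d'}(f)\cap\interior A\}\geq k'$. Pick a point $a\in Z_{d'}(f)\cap\interior A$ lying on a $k'$-dimensional piece of this set, so that near $a$ the Taylor expansion of $f$ has no terms of order $<d'$. After translating $a$ to the origin, the lowest-degree homogeneous part of $f$ has degree $d'$, and one can find a $k'$-dimensional affine subspace through $a$ along which $f$ vanishes to order $\geq d'$ (more precisely, a local $k'$-dimensional submanifold of $Z_{d'}(f)$, which I can replace by its tangent plane up to higher-order error). On a small box of the form (tangent directions of size $\rho$) $\times$ (transverse directions of size $r$) contained in $\interior A$, the Taylor estimate gives $|f(x)|\leq C_1 r^{d'}$ when the transverse coordinates have size $\leq r$, independently of the $k'$ tangential coordinates of size $\leq\rho$; the technical point is to absorb the contributions of mixed and higher-order terms, which is handled by shrinking $\rho$ and $r$ together. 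Hence the box $\{|f|\leq C_1 r^{d'}\}$ has volume $\gtrsim \rho^{k'} r^{n-k'}$, and setting $t=C_1 r^{d'}$ gives $V(t)\geq C' t^{(n-k')/d'}$ for small $t$, with the bound extended to all $t\geq 0$ by adjusting the constant (since $V$ is nondecreasing and bounded below trivially on any fixed range).

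For part (iii), under the hypothesis that $0\in\interior A$ and $f$ satisfies the condition of Lemma \ref{Lemma3}, I would combine star-shapedness with a homogeneity-type scaling. Pick $\epsilon_0>0$ with $B(0,\epsilon_0)\subset\interior A$ and such that Lemma \ref{Lemma3} applies on $B(0,\epsilon_0)$. By Lemma \ref{Lemma3}, $|f(ux)|\leq|f(x)|$ for $u\in[0,1]$, so $K_{B(0,\epsilon_0),f,t}$ is star-shaped about $0$; moreover, since $f$ has an isolated zero at $0$ and order exactly $d$ there (the hypothesis forces the lowest homogeneous part to be, essentially, positive-definite of degree $d$, cf.\ the Example), there is $c>0$ with $|f(x)|\geq c\|x\|^{d}$ for $x$ in a possibly smaller ball $B(0,\epsilon_1)$. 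Then $K_{B(0,\epsilon_1),f,t}\subset B\big(0,(t/c)^{1/d}\big)$, so $\vol(K_{B(0,\epsilon_1),f,t})\leq \omega_n (t/c)^{n/d}$; the part of $A$ outside $B(0,\epsilon_1)$ contributes zero for small $t$ because $|f|$ is bounded below there (isolated zero). Combining and adjusting the constant gives $V(t)\leq C''' t^{n/d}$ for all $t\geq 0$.

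The main obstacle I anticipate is in part (i): ensuring the uniform lower bound $\sum_j|a_j(x')|\geq m>0$ over the compact parameter set — i.e., choosing the direction so that the "leading behavior" of $f$ along the lines does not degenerate anywhere over $\overline{\mathrm{proj}(A)}$. Choosing coordinates so that the pure top-degree monomial $x_n^d$ appears with nonzero coefficient (always possible by a generic linear change of variables, since the degree-$d$ part of $f$ is a nonzero form) makes $a_d$ a nonzero constant and resolves this cleanly; the remaining work is then the bookkeeping in applying Lemmas \ref{Lemma1} and \ref{Lemma2} and Fubini. In parts (ii) and (iii) the only delicate points are the Taylor-remainder estimates on the anisotropic boxes, which are routine once the scaling $t\sim r^{d'}$ (resp.\ $t\sim r^d$) is fixed.
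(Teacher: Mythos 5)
Your parts (i) and (ii) are correct and close to the paper's argument, with one worthwhile variation in (i): instead of invoking a ``good direction'' lemma (the paper uses \cite[Lemma 5.6]{P} to guarantee that lines meet $f^{-1}(0)$ in finite sets and then appeals to compactness to get $\sum_j|a_j(\hat{x}_e)|\geq m>0$), you normalize the coordinates so that the top coefficient $a_d$ is a nonzero constant, which makes the uniform lower bound immediate. That is cleaner and avoids the compactness/no-common-zero step. In (ii) you essentially reproduce the paper's argument: take a $k'$-dimensional manifold $\Gamma\subset Z_{d'}(f)\cap\interior A$, use that $f$ vanishes to order $d'$ at each point of $\Gamma$ to estimate $|f|\lesssim r^{d'}$ on a tube of transverse radius $r$ around $\Gamma$, and integrate; one caution is that you should work with $\Gamma$ itself rather than its tangent plane, since at points of the tangent plane off $\Gamma$ the vanishing order of $f$ may drop below $d'$, and you do note this.

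Part (iii) has a genuine gap. You assert that the hypothesis of Lemma~\ref{Lemma3} forces $\ord_0 f=d$ (``the lowest homogeneous part to be, essentially, positive-definite of degree $d$''). That is false: $f(x,y)=x^2+y^4$ has degree $4$, an isolated zero at the origin, satisfies $\langle\nabla f(x,y),(x,y)\rangle=2x^2+4y^4>0$ for $(x,y)\neq 0$, yet $\ord_0 f=2$. The downstream inequality $|f(x)|\geq c\|x\|^d$ near $0$ is therefore not obtainable by the route you describe, and in general such a \L{}ojasiewicz-type bound with exponent exactly $d$ is a nontrivial claim that needs to be proved, not read off from the order at $0$. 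The paper's proof of (iii) goes differently: it uses the star-shapedness from Lemma~\ref{Lemma3} to reduce to one-dimensional fibers through the origin, applies the part (i) one-variable estimate along each direction $e$ to get $\vol_1(\{s:|f(se)|\leq t\})\leq C'_e\,t^{1/d}$, and then argues (modulo a continuity-in-$e$ point it leaves somewhat informal) that $\sup_e C'_e<\infty$, which gives $K_{A\cap B(0,\epsilon_0),f,t}\subset B(0,Ct^{1/d})$ and hence the $t^{n/d}$ bound. Note that this containment, read contrapositively, \emph{is} the inequality $|f(x)|\gtrsim\|x\|^d$; but it is a consequence of the full one-variable machinery plus star-shapedness, not something that follows from the order at $0$. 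To repair your (iii) you would need to actually derive that \L{}ojasiewicz bound, e.g.\ by the paper's ray-by-ray argument, which is precisely where the star-shapedness you mention is used in an essential way.
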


\begin{proof}
(i) First, note that $V(t)\leq \vol(\{x\in \bar{A}: |f(x)|\leq t\})$, we can assume that $A=\bar{A}$, i.e. $A$ is compact.

If $f^{-1}(0)\cap A=\emptyset$, then by compactness $V(t)=0$ for all $t\geq 0$ sufficiently small, and the desired inequality follows.
\\
 Now, suppose that $f^{-1}(0)\cap A\neq \emptyset$, and hence $\dim f^{-1}(0)\geq 0$. Let $R=\sup\{\|x\|: x\in A\}$.

One can observe that $f^{-1}(0)$ is an algebraic set of dimension $\leq n-1$, so by \cite[Lemma 5.6]{P},
there exists an unit vector $e\in \R^n$ such that the following condition is satisfied:
\\
($*$) 
$\dim V_x\cap f^{-1}(0) \leq 0$, for all $x\in \R^n$,
where $V_x=x+\R e$.
\\
We set
$x=\hat{x}_e +x_e e, \text{where } x_e=\langle x,e\rangle$,
and $
 f(x)=f_e(\hat{x}_e,x_e)=a_{0}(\hat{x}_e)+\cdots+a_{d}(\hat{x}_e)x_e^{d}.
 $
 \\
Applying ($*$), one can imply that $f|_{V_{x}}$ has only isolated zeros for all $x\in\R^n$. This follows that $a_{0}, \cdots, a_{d}$ have no common zero.  Also, by the continuity of $a_{j}$'s and the compactness of $A$, there exists $m>0$, such that
\[m\leq |a_{0}(\hat{x}_e)|+\cdots+|a_{d}(\hat{x}_e)|, \text{for all } x\in A.\]
Now the conditions in Lemma \ref{Lemma2} are satisfy, so there exists $\lambda>0$ such that for each $x\in A$ either there exists $p\in\{1,\cdots, d\}$ such that
\[\bigg|\ds\frac{\partial^p f_e}{\partial x_e^p}(\hat{x}_e,x_e)\bigg|\geq \lambda, \text{for all }  x_e\in I,\]
or $|a_{0}(\hat{x}_e)|-\sup_{x_e\in I}|f_e(\hat{x}_e,x_e) -a_{0}(\hat{x_e})|\geq\lambda$.

Combine with parts (i) and (ii) of Lemma \ref{Lemma1}, we get $C'>0$, such that
\[\vol_1(\{x_e\in I: |f_e(\hat{x}_e,x_e)|\leq t\})
\leq C't^{\frac 1{d}}, \ \text{for all }  0\leq t\leq 1, x\in A.\]
Next, applying Fubini's theorem to estimate the volume of sub-level set, we have
\[\vol(K_{A,f,t})=\vol(\{x\in A:|f(x)|\leq t\})\leq C't^{\frac{1}d} \vol_{n-1}(p_e(A)) \leq C'' t^{\frac{1}{d}} \text{for all } 0\leq t\leq 1, \]
where $p_e:\R^n\to e^{\perp}$ is the orthogonal projection along $e$, and $C''=C'\vol_{n-1}(p_e(A))$.\\
Finally, taking $C=\max\{C'', {\vol(A)}\}$, we get
\[ V(t)=\vol(K_{A,f,t})\leq Ct^{\frac{1}d}, \text{for all } t\geq 0.\]

(ii) Since $Z_{d'}(f)$ is a semialgebraic set and, by the supposition,  $\dim Z_{d'}(f)\cap \interior A\geq k'$, there exists a manifold $\Gamma \subset Z_{d'}(f)\cap \interior A$ with $\dim \Gamma =k'$.  Moreover, $\Gamma$ can be chosen so that there exists a $(n-k')$-dimensional linear subspace $V$ of $\R^n$ such that the restriction to $\Gamma$ of the orthogonal projection along $V$, $p_V|_{\Gamma}:\Gamma\to p_V(\Gamma)\subset V^{\perp}$, is a bi-Lipschitz homeomorphism.
\\
Let $(e_1,\cdots,e_n)$ be a orthogonal basis of $\mathbb{R}^n$ such that $V=L(e_1,\cdots,e_{n-k'})$.
In this basis, we write
\begin{align*}
&x=(x_1,\cdots,x_n), \bar{x}=(x_1,\cdots,x_{n-k'})\in V,\hat{x}=(x_{n-k'+1},\cdots,x_n)\in V^{\perp}.
\end{align*}
For points $a\in \Gamma$ and  $x=(\bar{a}+\bar{u},\hat{a})\in V_a=a+V$, we set
\begin{align*}f|_{V_a}(x)=g_{\hat{a}}(\bar{u})=g_{\hat{a},d'}(\bar{u})+\cdots+g_{\hat{a},d}(\bar{u}),
\end{align*}
where $g_{\hat{a},j}$ is the homogeneous component of degree $j$ of $g_{\hat{a}}$.
\\
Then for all   $x=(\bar{a}+\bar{u},\hat{a})\in V_a=a+V$ with $0<\Vert \bar{u}\Vert<1$,
\[
\begin{array}{lll}
|f(x)|=|g_{\hat{a}}(\bar{u})| &\leq |g_{\hat{a},d'}(\bar{u})|+\cdots+|g_{\hat{a},d}(\bar{u})|\\
&\leq |g_{\hat{a},d'}(\frac{ \bar{u}}{\|\bar{u}\|})|\|\bar{u}\|^{d'}+\cdots+ |g_{\hat{a},d}(\frac{ \bar{u}}{\|\bar{u}\|})|\|\bar{u}\|^{d}\\
&\leq \max\{|g_{\hat{a},d'}(\bar{v})|+\cdots+|g_{\hat{a},d}({\bar{v}})|:\|\bar{v}\|=1\}\|\bar{u}\|^{d'}.
\end{array}
\]
By the continuity of $g_{\hat{a},j}$'s, shrinking $\Gamma$ if necessary, we have \[M=\sup\{|g_{\hat{a},d'}(\bar{v})|+\cdots+|g_{\hat{a},d}({\bar{v}})|:a\in\Gamma, \|\bar{v}\|=1\}<+\infty.\]
From the above estimate, for all $a\in \Gamma$ and $x=(\bar{a}+\bar{u},\hat{a})\in V_a$ with $\|\bar{u}\|<M^{-\frac 1{d'}}t^{\frac 1{d'}}<1$, we have
$|f(x)|<t$, i.e. $K_{A,f,t}\cap V_a $ contains a $(n-k')$-dimensional ball of radius $M^{-\frac 1{d'}}t^{\frac 1{d'}}$.
By the Fubini theorem or the coarea formula, see \cite[(3.2.22)]{F}, we get the lower bound for the volume of sub-level set
\[V(t)\geq  \vol_{k'}(p_V(\Gamma)) \vol_{n-k'}(B^{n-k'}(0, M^{-\frac 1{d'}}t^{\frac 1{d'}}))\geq \vol_{k'}(p_V(\Gamma))M^{-\frac{n-k'}{d'}}t^{\frac{n-k'}{d'}}, \]
for all $t>0$ sufficiently small.
 From this (ii) follows.

{\color{black}
(iii) 
Let $ \{e_1,\dots, e_n\}$ be a basis of $\R^n$.
 Since $f$ has isolated zero $0\in A$, from Lemma \ref{Lemma3}, the condition\\
\hspace{0.5cm}$(*)$ \ $\dim (V_x\cap f^{-1}(0))\leq 0$
\\
holds for all $x\in A$ and $V_x=x+\R e$, where $e\in \R^n\setminus\{0\}$ is any direction.
Then one can follow the proof in (i), for each $e\in S^{n-1}$, there exists $C_e'>0$, such that
\begin{align*}
    \vol_1(\{ x_e\in I: \vert f_e(\hat{x}_e,x_e)\vert\leq  t\})\leq C_e' t^{1/d}\text{ for all } 0\leq t\leq 1, x\in A.
\end{align*}
By Lemma \ref{Lemma3}, there exists $\epsilon_0>0$ such that the sub-level set
\begin{align*}
    K_{A\cap B(0,\epsilon_0),f,t}=\{ x\in A\cap B(0,\epsilon_0): \vert f(x)\vert\leq t\}
\end{align*}
is bounded and star-shaped. By using a suitable change of coordinate, we can choose the direction $e_1$ in the basis of $\R^n$ so that $C_{e_1}'\geq C_e $ for all $e\in \R^n$.
\\
If we put $t_0=\min\big\{1,\big(\frac{\epsilon_0}{C_{e_1}'}\big)^d\big\}$, then 
\begin{align*}
    K_{A\cap B(0,\epsilon_0),f,t}\subset B(0,C_{e_1}' t^{1/d})\subset B(0,\epsilon_0),\text{ for all } t\in [0,t_0].
\end{align*}
Therefore, for all $t\in (0,t_0]$, we have
\[\vol(K_{A\cap B(0,\epsilon_0),f,t})\leq \vol(B(0,C_{e_1}'t^{1/d}))\leq \vol(B(0,1))(C_{e_1}')^n t^{n/d}.\]
Finally, by the compactness of $A$, we see that $V(t)\leq\vol(A)\leq \frac{\vol(A)}{t_0^{n/d}}$ when $t_0\leq t$. So we can choose $C'''=\max\{\vol(B(0,1))(C_{e_1}')^n, \ds\frac{\vol(A)}{t_0^{n/d}}\}$ to get the upper bound 
\[ V(t)=\vol(\{x\in A:|f(x)|\leq t\})\leq C'''t^{\frac{n}d}, \text{for all } t\geq 0.\]
}

\end{proof}

{
\begin{cor} Let $f: \R^n\to\R$  be a nonzero polynomial  function of degree $d$ and $A$ be a bounded subset  of $\R^n$. 
 Then\\
{\rm (i)}  If $f^{-1}(0)\cap\interior A\neq\emptyset$, then
  there exist $C,C'>0$ such that 
\[C't^n\leq V(t)\leq Ct^{\frac{1}d},  \text{ for all } t\geq 0.\]
{\rm (ii)}   If there is $a\in f^{-1}(0)\cap\interior A$ with $\ord_af=d'$, then there exists $C''>0$ such that 
\[V(t)\geq C''t^{\frac{n}{d'}},  \text{ for all } t\geq 0.\]
\end{cor}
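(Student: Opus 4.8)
The plan is to obtain both parts directly from Theorem \ref{Th1}, since the corollary only repackages the universal upper bound of part (i) of that theorem together with the most elementary instance of its lower bound, namely the case $k'=0$. No new ideas are needed; the work is entirely in specialising and comparing exponents.

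First I would dispose of the upper bound in (i): it is literally Theorem \ref{Th1}(i), which holds for an arbitrary nonzero polynomial $f$ of degree $d$ and an arbitrary bounded set $A$ with no extra hypothesis. So there is $C=C(f,A)>0$ with $V(t)\le Ct^{1/d}$ for all $t\ge 0$, and the hypothesis $f^{-1}(0)\cap\interior A\neq\emptyset$ plays no role here. Next I would prove (ii). Given $a\in f^{-1}(0)\cap\interior A$ with $\ord_a f=d'$, the point $a$ lies in $Z_{d'}(f)\cap\interior A$, so this set is nonempty and hence $\dim\bigl(Z_{d'}(f)\cap\interior A\bigr)\ge 0$. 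Applying Theorem \ref{Th1}(ii) with $k'=0$ then yields $C''=C''(f,A,d')>0$ with $V(t)\ge C''t^{(n-0)/d'}=C''t^{n/d'}$ for all $t\ge 0$ (in the same sense as in the cited theorem, i.e. the extension from small $t$ to all $t\ge 0$ coming from the monotonicity of $V$, exactly as in the proof of Theorem \ref{Th1}(ii)).

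Finally, the lower bound in (i). I would pick any $a\in f^{-1}(0)\cap\interior A$ and set $d'=\ord_a f$; since $f(a)=0$ and $f$ is a nonzero polynomial of degree $d$, we have $1\le d'\le d$. By part (ii) just proved (equivalently, by Theorem \ref{Th1}(ii) with $k'=0$ at this $d'$) there is $C_0>0$ with $V(t)\ge C_0 t^{n/d'}$ for $t\ge 0$ small. Because $d'\ge 1$ we have $n/d'\le n$, hence $t^{n/d'}\ge t^n$ for $0\le t\le 1$, so $V(t)\ge C_0 t^n$ on $[0,1]$; combining this with the fact that $V$ is nondecreasing gives a bound of the asserted form $V(t)\ge C' t^n$ for all $t\ge 0$, which together with the upper bound from Theorem \ref{Th1}(i) completes (i).

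As for difficulty, there is essentially no obstacle: every assertion is a specialisation of Theorem \ref{Th1}. The only two points needing a moment's care are that the hypotheses are phrased with $\interior A$ — which is precisely what Theorem \ref{Th1}(ii) requires of the set $Z_{d'}(f)\cap\interior A$ — and the elementary exponent comparison $n/d'\le n$ (valid since $d'\ge 1$) that weakens the sharper exponent $n/d'$ to the uniform exponent $n$ appearing in the statement; the passage from small $t$ to all $t\ge 0$ is handled, as throughout the paper, by the monotonicity of $V$.
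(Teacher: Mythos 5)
Your proposal is correct, and it follows the same strategy as the paper (both parts are read off as specializations of Theorem \ref{Th1}); the paper's own proof is the one-liner ``Apply Theorem \ref{Th1} with $d'=1$ for (i), and $k'=0$ for (ii).'' Your proof of (ii) is identical. For the lower bound in (i), however, your version is actually a bit more careful than the paper's. Taken literally, the paper's instruction ``$d'=1$'' invokes Theorem \ref{Th1}(ii) with $Z_1(f)\cap\interior A$, i.e.\ it needs a zero of $f$ in $\interior A$ of order exactly $1$; the hypothesis $f^{-1}(0)\cap\interior A\neq\emptyset$ does not guarantee this (e.g.\ $f(x)=x^2$ has only a zero of order $2$). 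You instead pick any $a\in f^{-1}(0)\cap\interior A$, set $d'=\ord_a f\in\{1,\dots,d\}$, apply Theorem \ref{Th1}(ii) with $k'=0$ at this $d'$ to get $V(t)\geq C_0 t^{n/d'}$, and then weaken the exponent $n/d'\leq n$ to obtain $V(t)\geq C_0 t^n$ for small $t$. That closes the small gap in the paper's terse proof while staying entirely within the framework of Theorem \ref{Th1}. One cosmetic caveat, which you inherit from the paper rather than introduce: the ``for all $t\geq 0$'' phrasing cannot be taken literally for a lower bound of the form $C't^n$ (the right-hand side is unbounded while $V(t)\leq\vol(A)$), and monotonicity of $V$ does not rescue it as it does for the upper bound; the statement is really about small $t$, as the proof of Theorem \ref{Th1}(ii) itself makes clear.
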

\begin{proof} Apply Theorem \ref{Th1} with $d'=1$ for (i), and $k'=0$ for (ii).

\end{proof}}

In the sequel, we will denote
\par
$f(t) \precsim g(t)$ iff there exists $C>0$ so that $ f(t)\leq Cg(t)$, for all $t>0$ sufficiently small.
\par
$f(t)\thickapprox g(t)$ iff $g(t)\precsim f(t)$ and $f(t)\precsim g(t)$.

\begin{remark} \label{rm1}
(i) The exponents in Theorem \ref{Th1} are explicit and easy to calculate  (cf. \cite[Theorem 7.1]{CCW} and \cite[Theorem 3.1,3.2]{DHSA}).
{\color{black}
}
\\
(ii) Due to the lower-bounds, in some cases, we can get the first exponent in the asymptotic expansion of $V(t)$, i.e. the exponent $\alpha_f$ so that $V(t)\thickapprox t^{\alpha_f}$ (see examples below).
{
\color{black} From Theorem \ref{Th1}, we have $\alpha_f\in [\ds\frac{1}d, \frac{n-k'}{d'}]$. In the case when $f$ satisfies the condition in Lemma \ref{Lemma3}, we get $\alpha_f\in [\ds\frac{n}d, \frac{n-k'}{d'}]$.
}
Nevertheless, in general, the data $(n, d=\deg f, k=\dim f^{-1}(0), d'=\ord_a f, k'=\dim Z_{d'}(f))$ are not sufficient  to get  $\alpha_f$.
For the case where $f$ is a homogeneous polynomial of even degree $d$ under the assumption that the sub-level set has finite volume on $\R^n$, it is  proved that  $\alpha_f=\frac{n}{d}$\
(see \cite{MS1},\cite{MS2} or  \cite[Theorem 2.2]{La}).
{\color{black}
}
\end{remark}

\begin{example}\label{Ex1}
{\color{black}
}
a) Let $f$ be a polynomial of degree $d$ in $n$ variables and $A$ be a bounded set. If $f$ has $\dim f^{-1}(0)=n-1$ and there exists $ a\in f^{-1}(0)$ such that $\ord_a f=d$, then $V(t)\thickapprox t^\frac{1}{d}$.
\\
b) For $f$ be a homogeneous polynomial of degree $d$ in $n$ variables, in general, we can not have $V(t)\thickapprox t^\frac{n-k}{d},\forall t\geq 0$, $k=\dim f^{-1}(0)$.
\\
For example, consider $f(x,y)=x^2y^2(x^2+y^2)$ on $A=[0,1]^2$. We have $f^{-1}(0)=Ox\cup Oy$ and $\dim f^{-1}(0)=1$, hence by Theorem \ref{Th1}, we get $C_1,C_2>0$:
$$C_1t^\frac{1}{2}\leq V(t)\leq C_2t^\frac{1}{6},\forall t\geq 0.$$
While by \cite[Theorem 2.2]{La}, we have $V(t)\thickapprox t^\frac{1}{3}$.
\end{example}

\section{Applications}
\medskip
In this section, applying Theorem \ref{Th1}, we give some explicit exponents in the estimates the decay of oscillatory integrals with phase functions being polynomials, and the convergent of integrals of the form $\int_A|f|^{-\gamma}$, where $f$ is a polynomial and $A$ is a bounded domain in $\R^n$.

First, we recall the van der Corput lemma.

\begin{lem}[van der Corput]\label{Corput}
Let $f:(a,b)\to\R$ be a $C^1$ function. Fix $t>0$.
Suppose that
$|f'(x)|\geq t, \forall x\in (a,b)$,
 and $f' $ is monotonic.
Then
\[ \left|\int_a^be^{i\lambda f(x)}dx\right|\leq 3(\lambda t)^{-1}, \textsl{ for all } \lambda>0.\]
\end{lem}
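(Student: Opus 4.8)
\emph{Proof proposal.} The plan is to run the classical integration-by-parts argument behind van der Corput's estimate, adapting it to the hypothesis that $f$ is merely $C^1$: the bound on $\int |f''|/(f')^2$ that one would use in the smooth case will be replaced by a total-variation bound coming from the monotonicity of $f'$. First I would reduce to the case that $(a,b)$ is a bounded interval on which $f'$ extends continuously; for an unbounded interval one applies the estimate on each closed subinterval $[a',b']\subset(a,b)$ and lets $a'\to a$, $b'\to b$, the uniform bound also giving convergence of the improper integral. Since $f'$ is continuous and $|f'|\ge t>0$, it keeps a constant sign, so after replacing $f$ by $-f$ if needed we may assume $f'\ge t$ on $[a,b]$.

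Then I would write $e^{i\lambda f(x)}=\frac{1}{i\lambda f'(x)}\frac{d}{dx}e^{i\lambda f(x)}$ and integrate by parts in Riemann--Stieltjes form, using that $1/f'$ is continuous and of bounded variation (being monotonic, since $f'$ is) while $e^{i\lambda f}$ is $C^1$:
\[
\int_a^b e^{i\lambda f(x)}\,dx=\left[\frac{e^{i\lambda f(x)}}{i\lambda f'(x)}\right]_a^b-\int_a^b e^{i\lambda f(x)}\,d\!\left(\frac{1}{i\lambda f'(x)}\right).
\]
The boundary term is bounded, using $|e^{i\lambda f}|=1$ and $f'\ge t$, by $\frac1\lambda\big(\frac1{f'(a)}+\frac1{f'(b)}\big)\le \frac2{\lambda t}$. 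The Stieltjes integral is bounded by $\frac1\lambda$ times the total variation of $1/f'$ on $[a,b]$; since $1/f'$ is monotonic, that total variation is $\big|\frac1{f'(b)}-\frac1{f'(a)}\big|\le\frac1t$, the two endpoint values both lying in $(0,1/t]$. Summing gives $\big|\int_a^b e^{i\lambda f(x)}\,dx\big|\le\frac2{\lambda t}+\frac1{\lambda t}=\frac3{\lambda t}$, as claimed.

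The step I expect to need the most care is the integration by parts itself, precisely because of the low regularity: with $f$ only $C^1$, $1/f'$ need not be differentiable, so one cannot literally write $d(1/f')=-f''/(f')^2\,dx$. The clean way around this is to read the second integral as a Riemann--Stieltjes integral against the monotone function $1/f'$ (or, equivalently, to approximate $f'$ uniformly by smooth monotone functions and pass to the limit). It is exactly the monotonicity hypothesis that guarantees $1/f'$ has bounded variation with the sharp bound $1/t$ on its total variation; once that is in hand the rest is a routine two-term estimate producing the stated constant $3$.
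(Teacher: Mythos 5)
The paper does not prove this lemma; it simply cites \cite[Ch.\ VIII, Prop.\ 2]{St} and \cite[Prop.\ 2.2]{CCW}. Your proposal reconstructs exactly the argument one finds in those sources: reduce (by replacing $f$ with $-f$) to $f'\ge t>0$, write $e^{i\lambda f}=(i\lambda f')^{-1}\tfrac{d}{dx}e^{i\lambda f}$, integrate by parts, bound the two boundary terms by $1/(\lambda t)$ each, and bound the remaining term by $\lambda^{-1}$ times the total variation of $1/f'$, which is $\le 1/t$ precisely because $f'$ (hence $1/f'$) is monotone and stays in $[t,\infty)$. The arithmetic $2/(\lambda t)+1/(\lambda t)=3/(\lambda t)$ is right. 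Your one genuine refinement over Stein is correctly handling the regularity mismatch: under the stated $C^1$ hypothesis the function $1/f'$ need not be differentiable, so the inner integral must be read as a Riemann--Stieltjes integral against the monotone function $1/f'$ rather than as $\int e^{i\lambda f}\,(1/f')'\,dx$; the integration-by-parts identity is valid here because $e^{i\lambda f}$ is $C^1$ and $1/f'$ is continuous and of bounded variation. Your reduction to compact subintervals at the start is also appropriate since $(a,b)$ is not assumed bounded. In short: the proposal is correct and is the standard proof underlying the paper's citation, with the low-regularity issue treated more carefully than in the textbook source.
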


\begin{proof} See, for example, \cite[Ch. VIII, Proposition 2]{St} or \cite[Prop. 2.2]{CCW}.
\end{proof}

For $g:\R^n\to\R$ being $C^1$ function with compact support, set
\[ \|g\|_{\infty}=\sup_{\R^n}|g|, \text{ and } \|\nabla g\|_1=\int_{\R^n}\|\nabla g\|.\]

\begin{thm}\label{Th2}
 Let $f: \R^n\to\R$  be  a  polynomial function of degree $d\geq 1$. Then for any semialgebraic compact subset $A$ of $\R^n$, there exists $C=C(f,A)>0$ such that
for any $C^1$ function $g:\R^n\to\R$ with compact support containing in $A$, we have
\[ \left|\int_Ae^{i\lambda f(x)}g(x)dx\right|\leq C\lambda^{-\frac{1}{d}}(\|g\|_{\infty}+\|\nabla g\|_1), \text{ for all } \lambda> 0.\]
\end{thm}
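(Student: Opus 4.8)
The plan is to follow the strategy outlined in the introduction, adapting the argument of \cite[Theorem 2]{Loi}: split the domain according to whether $\|\nabla f\|$ is small or large, control the small part by the sub-level set volume estimate of Theorem \ref{Th1}, control the large part by integration by parts plus van der Corput's lemma, and finally optimize the threshold as a power of $\lambda$. Throughout, $C$ will denote a constant depending only on $f$ and $A$, changing from line to line.

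First I would set up the decomposition. Fix a parameter $t>0$ to be chosen later, and write $A = A_t' \cup A_t''$ where $A_t' = \{x\in A : \|\nabla f(x)\| \le t\}$ and $A_t'' = \{x\in A : \|\nabla f(x)\| > t\}$. On $A_t'$ we simply bound the integrand in absolute value: $\left|\int_{A_t'} e^{i\lambda f}g\,dx\right| \le \|g\|_\infty \vol(A_t')$. Now the key point is that $A_t'$ is exactly a sub-level set of the polynomial $h = \|\nabla f\|^2 = \sum_{j} (\partial f/\partial x_j)^2$, which has degree $\le 2(d-1)$, applied at level $t^2$; namely $A_t' = K_{A,h,t^2}$. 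Since $h$ is a nonzero polynomial when $d\ge 2$ (and when $d=1$ the gradient is a nonzero constant, so $A_t'=\emptyset$ for small $t$ and the theorem is trivial via van der Corput applied directly), Theorem \ref{Th1}(i) gives $\vol(A_t') \le C (t^2)^{1/(2(d-1))} = C\, t^{1/(d-1)}$. Here I should be slightly careful: I actually want an exponent that will combine to give $\lambda^{-1/d}$ at the end, so I may instead use the cruder bound coming from the structure of the problem, or note that one only needs $\vol(A_t') \le C t^{1/(d-1)}$ together with the large-part estimate below and then re-optimize; the final exponent $1/d$ will emerge from balancing, so I will keep the computation symbolic until the end.

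For the large part $A_t''$, I would integrate by parts after introducing the vector field $\nabla f / \|\nabla f\|^2$, along the lines of the standard non-stationary phase / van der Corput argument. Concretely, on each line segment in a direction where a chosen partial derivative $\partial f/\partial x_j$ dominates, write $e^{i\lambda f} = \frac{1}{i\lambda \partial_j f}\partial_j(e^{i\lambda f})$ and integrate by parts in $x_j$; the boundary terms and the term where the derivative falls on $g/\partial_j f$ are controlled by $(\lambda t)^{-1}(\|g\|_\infty \cdot (\text{number of monotonicity pieces of }\partial_j f) + \|\nabla g\|_1)$. To make this rigorous and uniform, I would use semialgebraic cell decomposition of $A$ (here the hypothesis that $A$ is semialgebraic compact is used): decompose $A$ into finitely many cells on each of which some $\partial_j f$ has constant sign, is bounded below in absolute value by $t/\sqrt n$ on $A_t''$, and is monotone in the $x_j$-direction on each fiber (using that $\partial_j f$ is a polynomial, hence has a bounded number of critical points along any line, a number bounded by $d$). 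Then apply Lemma \ref{Corput} (or rather its integration-by-parts refinement incorporating the amplitude $g$) fiberwise and integrate over the transverse variables via Fubini, yielding $\left|\int_{A_t''} e^{i\lambda f}g\,dx\right| \le C (\lambda t)^{-1}(\|g\|_\infty + \|\nabla g\|_1)$.

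Combining the two estimates, $\left|\int_A e^{i\lambda f}g\,dx\right| \le C\big(t^{1/(d-1)}\|g\|_\infty + (\lambda t)^{-1}(\|g\|_\infty + \|\nabla g\|_1)\big)$, and choosing $t = \lambda^{-(d-1)/d}$ balances the two powers of $\lambda$ to $\lambda^{-1/d}$, giving the claim for large $\lambda$; for $\lambda$ in a bounded range away from $0$ the estimate is trivial since $\vol(A)<\infty$ and $\lambda^{-1/d}$ is bounded below. The main obstacle I anticipate is the large-part estimate: making the fiberwise van der Corput argument genuinely uniform in $g$ and in the cell, i.e. getting the constant (in particular the count of monotonicity intervals of the relevant partial derivatives) to depend only on $f$ and $A$ and not on $g$ or $t$ or $\lambda$. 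This is exactly where one needs the quantitative semialgebraic geometry — a uniform bound on the number of connected components of $\{x : \partial_j f(x)=c\}\cap(\text{line})\cap A$ — and where the argument most closely parallels, and must be adapted from, \cite[Theorem 2]{Loi}. Writing out the integration by parts carefully so that the $\|\nabla g\|_1$ term (rather than a $\|\nabla g\|_\infty$ term) appears is also a point requiring attention, but it is the standard trick of letting the derivative fall on $g$ and estimating $\int |\nabla g|$ directly.
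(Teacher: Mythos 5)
Your proposal is correct and follows essentially the same route as the paper's proof: decompose $A$ according to whether $\|\nabla f\|\le t$ or $\|\nabla f\|>t$, bound the small-gradient piece by Theorem~\ref{Th1} applied to $\|\nabla f\|^2$ (giving $\vol \le C t^{1/(d-1)}$), bound the large-gradient piece by splitting along the coordinate axes, integrating by parts fiberwise with van der Corput after observing that $\{x_n : |\partial_n f(x',x_n)|\ge t/n\}$ is a union of at most $d$ intervals, and then balance with $t=\lambda^{-(d-1)/d}$. Your explicit observation that the case $d=1$ needs separate (trivial) treatment is a small improvement over the paper, which tacitly assumes $d\ge 2$ when it writes the bound $C_1 t^{1/(d-1)}$.
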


\begin{proof} (cf. \cite{Loi}) For each $t>0$, put $A=A_t\cup B_t$,
 where
 \[A_t=\{x\in A: \|\nabla f(x)\|\leq t\}, \
 B_t=\{x\in A: \|\nabla f(x)\|>t\}.\]
 We will estimate the integrals on each set of the union.
\\
To estimate the integral on $A_t$, we apply Theorem \ref{Th1} to get $C_1>0$ so that
\[ \vol(A_t)= \vol(\{x\in A:\|\nabla f(x)\|^2\leq t^2\})\leq
C_1t^{\frac{1}{d-1}}.\]
Therefore,
\[\left|\ds\int_{A_t}e^{i\lambda f( x)}g(x)dx\right|
\leq \vol(A_t)\|g\|_{\infty}
\leq C_1t^{\frac{1}{d-1}} \|g\|_{\infty}.
\]
To estimate the integral on $B_t$, we apply the van der Corput Lemma \ref{Corput}. First,  note that
\[
B_t\subset  \bigcup_{k=1}^n \{x\in A: |\partial_kf(x)|\geq \textstyle\frac{t}{n} \}, 
\]
For $k=n$, let $x=(x',x_n)$ denote a point in $B_t\subset \R^{n-1}\times I$,  $\tilde{B}_t$ the projection of $B_t$ to the first $n-1$ coordinates, and $I$ denote an interval so that $B_t\subset\tilde{B}_t\times I$.
For each $x'\in\tilde{B}_t$, since $B_t$ is a semialgebraic set, there exists $N\in\N$, independing of $x'$ and $t$, such that the set
\[\{ x_n\in I:  |\partial_nf(x',x_n)|\geq \textstyle\frac{t}{n}\}\]
is the union of atmost $d$ intervals, say $I_1,\cdots, I_d$ (depending on $x'$ and $t$,  and some of them may be empty).
On each of the intervals, say $I_j=(a,b)$,
Let $F(x',x_n)=\int_a^{x_n}e^{i\lambda f(x',s)}ds, x_n\in I_j$.
Since $|\partial_nf(x',x_n)|\geq t/n$ on $I_j$, by the van der Corput Lemma \ref{Corput}, $|F(x',x_n)|\leq 3(\frac{\lambda t}n)^{-1}$.
Integrating by parts and using this inequality,  we get
\[\begin{array}{ll}
\left|\ds\int_{ I_j}e^{i\lambda f(x',x_n)}g(x',x_n)dx_n\right|
&=\left|\ds\int_{I_j}\partial_nF(x',x_n)g(x',x_n)dx_n\right|
\\
&\leq  3(\frac{\lambda t}n)^{-1}2\|g\|_{\infty}+ 3(\frac{\lambda t}n)^{-1}
\ds\int_{ I_j}|\partial_ng(x',x_n)|dx_n.
\end{array}
\]
 Applying the  Fubini Theorem and  the above estimation on each of the intervals, we get
\[\begin{array}{llll}
\left|\ds\int_{ |\partial_nf|\geq \frac{t}n}e^{i\lambda f(x)}g(x)\chi_{B_t}(x)dx\right|
&\leq
\ds\int_{\tilde{B}_t}\left(\sum_{j=1}^d\left|\int_{I_j}e^{i\lambda f(x',x_n)}g(x',x_n)dx_n\right|\right)dx' \\
&\leq
 \ds\int_{\tilde{B}_t}\left(\ds\sum_{j=1}^d \left(3(\textstyle\frac{\lambda t}n)^{-1}(2\|g\|_{\infty}+\int_{I_j}|\partial_ng(x',x_n)|dx_n)\right)\right)dx'\
\\
&\leq C_2(\lambda t)^{-1}(\|g\|_{\infty}+\|\nabla g\|_1),
\end{array}\]
where $C_2=\max_{1\leq k\leq n}\text{Vol}_{n-1}(p_k(A))d6n$, and
$p_k:\R^n\to\R^{n-1}$ is the projection missing the $k$-$th$ coordinate.
\\
 Using the similar estimations for $k=1,2,\cdots $, we get $C_3>0$ so that
\[
\begin{array}{ll}
\left|\ds\int_{B_{t}}e^{i\lambda f(x)}g(x)dx\right|
&\leq \ds\sum_{k=1}^n\left|\ds\int_{ |\partial_kf|\geq \frac tn}e^{i\lambda f(x)}g(x)\chi_{B_t}(x)dx\right|\\
&\leq C_3(\lambda t)^{-1}(\|g\|_{\infty}+\|\nabla g\|_1). 
\end{array}
\]
From the above estimates, we have
 \[
 \begin{array}{lll}
 \left|\ds\int_Ae^{i\lambda f(x)}g(x)dx\right|
& \leq \left|\int_{A_t}e^{i\lambda f( x)}g(x)dx\right|+\left|\int_{B_t } e^{i\lambda f(x)}g(x)dx\right|\\
& \leq (C_1t^{\frac{1}{d-1}}+C_3(\lambda t)^{-1})(\|g\|_{\infty}+\|\nabla g\|_1)
 \end{array}
 \]
 Now we choose $t=\lambda^{-\delta}$, so that $t^{\frac{1}{d-1}}=(\lambda t)^{-1}$,
 to get $\delta=\frac{d-1}{d}$, and hence
\[ \left|\int_Ae^{i\lambda f(x)}g(x)dx\right|\leq C\lambda^{-\frac{1}{d}}(\|g\|_{\infty}+\|\nabla g\|_1), \text{ for all } \lambda> 0,\]
where  $C=C_1+C_3$.
\end{proof}


\begin{thm}\label{Th3}
 Let $f: \R^n\to\R$  be a nonzero polynomial  function.
 Let $A$ be a bounded subset $\R^n$. Put $V(t)=\vol(\{x\in A: |f(x) |\leq t\})$.
 \\
 {\rm (i) } If $V(t)\leq Ct^{\alpha}$ for all $t\geq0$ sufficiently small, then
$ \int_A|f(x)|^{-\gamma}dx<+\infty$, when $\gamma<\alpha$. \\
{\rm (ii) } If $V(t)\geq C't^{\alpha'}$ for all $t\geq0$ sufficiently small, then
$ \int_A|f(x)|^{-\gamma}dx=\infty$, when $\gamma\geq \alpha'$. \\
{
 As a consequence, when $\deg f=d$, and $\dim Z_{d'}(f)\cap\interior A\geq k'$,
  one can choose $\alpha=\frac{1}d, \alpha'=\frac {n-k'}{d'}$.
{\color{black}
If $f$ satisfies the condition given in Lemma \ref{Lemma3}, then $\alpha=\frac{n}{d}$.
}
  }
\end{thm}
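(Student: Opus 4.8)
The plan is to reduce the convergence and divergence of the singular integral $\int_A |f(x)|^{-\gamma}\,dx$ to the growth rate of $V(t)$ near $t=0$ via a Stieltjes-integral representation, exactly as announced in the introduction. First I would note that $V$ is a nondecreasing, bounded, right-continuous function on $[0,\infty)$ with $V(0)=\vol(f^{-1}(0)\cap A)$, and that the distribution function of $|f|$ on $A$ is precisely $V$, so that by the layer-cake formula
\[
\int_A |f(x)|^{-\gamma}\,dx \;=\; \int_{(0,\infty)} t^{-\gamma}\,dV(t).
\]
One subtlety to dispose of at the outset: if $V(0)>0$, i.e. $f^{-1}(0)\cap A$ has positive measure, then for $\gamma>0$ the integral is trivially $+\infty$ (the integrand is $+\infty$ on a set of positive measure), and both conclusions are consistent since any valid $\alpha'$ in (ii) would be $0$; for $\gamma\le 0$ there is nothing to prove as $|f|^{-\gamma}$ is bounded on the bounded set $A$ (after passing to $\bar A$ and using that $f$ is bounded there). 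So I may assume $\gamma>0$ and $V(0)=0$, and it suffices to analyze $\int_{(0,\epsilon)} t^{-\gamma}\,dV(t)$ for small $\epsilon$, the tail $\int_{[\epsilon,\infty)}$ being bounded by $\epsilon^{-\gamma}\vol(A)$.

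For part (i): assuming $V(t)\le Ct^\alpha$ for $t$ small, I would integrate by parts in the Stieltjes integral on $[\eta,\epsilon]$,
\[
\int_\eta^\epsilon t^{-\gamma}\,dV(t) \;=\; \epsilon^{-\gamma}V(\epsilon) - \eta^{-\gamma}V(\eta) + \gamma\int_\eta^\epsilon t^{-\gamma-1}V(t)\,dt,
\]
then bound $V(t)\le Ct^\alpha$ to get $\eta^{-\gamma}V(\eta)\le C\eta^{\alpha-\gamma}\to 0$ (since $\gamma<\alpha$) and $\gamma\int_\eta^\epsilon t^{-\gamma-1}V(t)\,dt \le C\gamma\int_0^\epsilon t^{\alpha-\gamma-1}\,dt<\infty$ (again since $\alpha-\gamma>0$). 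Letting $\eta\to 0^+$ gives finiteness. For part (ii): assuming $V(t)\ge C't^{\alpha'}$ for $t$ small and $\gamma\ge\alpha'$, I would argue from below — either directly, $\int_{(0,\epsilon)}t^{-\gamma}\,dV(t)\ge \liminf_{\eta\to0}\int_\eta^\epsilon t^{-\gamma}\,dV(t)$, and the integration-by-parts identity together with $V(\eta)\ge C'\eta^{\alpha'}$ forces $\gamma\int_\eta^\epsilon t^{-\gamma-1}V(t)\,dt \ge C'\gamma\int_\eta^\epsilon t^{\alpha'-\gamma-1}\,dt$, which diverges as $\eta\to0$ because $\alpha'-\gamma-1\le -1$. (A cleaner route: $\int_A|f|^{-\gamma}\ge \int_{\{|f|\le\epsilon\}}|f|^{-\gamma}\ge \sum_{j\ge 0}(2^{-j}\epsilon)^{-\gamma}\big(V(2^{-j}\epsilon)-V(2^{-j-1}\epsilon)\big)$ and dyadic summation with the lower bound on $V$ diverges.) I would present whichever of these is shortest.

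The final "as a consequence" sentence is then immediate: Theorem \ref{Th1}(i) supplies $\alpha=\tfrac1d$, so (i) here gives convergence for $\gamma<\tfrac1d$; Theorem \ref{Th1}(ii) supplies $\alpha'=\tfrac{n-k'}{d'}$ under the dimension hypothesis $\dim Z_{d'}(f)\cap\interior A\ge k'$, so (ii) here gives divergence for $\gamma\ge\tfrac{n-k'}{d'}$; and when $f$ satisfies the hypothesis of Lemma \ref{Lemma3} with $0\in\interior A$, Theorem \ref{Th1}(iii) upgrades the upper bound to $\alpha=\tfrac nd$.

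I do not expect a serious obstacle; the only genuinely delicate point is the interchange of limit and integral / the boundary-term control in the integration by parts near $t=0$, i.e. verifying $\eta^{-\gamma}V(\eta)\to 0$, which is why the case split on the sign of $V(0)$ and on $\gamma$ must be handled first. Everything else is a routine estimate of $\int_0^\epsilon t^{\alpha-\gamma-1}\,dt$ versus $\int_0^\epsilon t^{\alpha'-\gamma-1}\,dt$, convergent iff the exponent exceeds $-1$.
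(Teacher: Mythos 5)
Your approach is essentially the same as the paper's: rewrite $\int_A|f|^{-\gamma}\,dx$ as the Stieltjes integral $\int_0^T t^{-\gamma}\,dV(t)$, integrate by parts, and feed in the power-law bounds on $V(t)$; part (i) and the ``consequence'' clause match the paper step for step. You are more careful than the paper about the boundary cases ($V(0)>0$ and $\gamma\le 0$), which the paper silently ignores but which are harmless for polynomial $f$.

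The one substantive point concerns part (ii). When $\gamma>\alpha'$ the boundary term $-\eta^{-\gamma}V(\eta)$ tends to $-\infty$ \emph{at the same time} as $\gamma\int_\eta^T t^{-\gamma-1}V(t)\,dt$ tends to $+\infty$, so the integration-by-parts identity delivers the indeterminate form $\infty-\infty$ and does not by itself force divergence of the left-hand side. The paper's proof of (ii) glosses over exactly this: it asserts $t^{-\gamma}V(t)\big|_0^T=+\infty$, whereas in fact this boundary contribution is $-\infty$ (or finite, when $\gamma=\alpha'$). Your ``first route'' for (ii) inherits the same gap. Your proposed ``cleaner route'' --- the dyadic decomposition of $\{|f|\le\epsilon\}$, or equivalently the layer-cake lower bound
\[
\int_A|f|^{-\gamma}\,dx \;\ge\; \int_{\epsilon^{-\gamma}}^{\infty}\vol\bigl(\{x\in A:|f(x)|< s^{-1/\gamma}\}\bigr)\,ds \;\ge\; C'\int_{\epsilon^{-\gamma}}^{\infty} s^{-\alpha'/\gamma}\,ds,
\]
which diverges precisely when $\gamma\ge\alpha'$ --- is the one that actually closes the argument, and is the version you should present. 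So keep that as the proof of (ii) rather than the integration-by-parts sketch.
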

\begin{proof} Note that $V(t)= \vol(\{x\in A: |f(x)|\leq t\})$ is a nondecreasing function in $t$. Let $T=\sup_A |f|$.
By the change variables, see \cite[2.5.18 (3) and 2.4.18 (1)]{F},
\[\int_A|f(x)|^{-\gamma}dx =\int_0^T t^{-\gamma}dV(t).\]
Applying the integration by parts, see \cite[2.6.7]{F}, we have
\[\int_0^T t^{-\gamma}dV(t)=t^{-\gamma}V(t)|_0^T+\gamma\int_0^Tt^{-\gamma-1}V(t)dt.\]
(i) If $V(t)\leq Ct^{\alpha}$, then $0\leq t^{-\gamma}V(t)\leq Ct^{-\gamma+\alpha}$ and $\int_0^Tt^{-\gamma-1}V(t)dt\leq \int_0^TCt^{-\gamma-1+\alpha}dt$. Therefore,  $t^{-\gamma}V(t)|_0^T<+\infty$ and $\gamma\int_0^Tt^{-\gamma-1}V(t)dt<+\infty$ when $\gamma<\alpha$, and hence,
$\int_A|f(x)|^{-\gamma}dx<+\infty$, when $\gamma<\alpha$.\\
(ii) Similarly, if  $V(t)\geq C't^{\alpha'}$, then $ t^{-\gamma}V(t)\geq C't^{-\gamma+\alpha'}$ and $\int_0^Tt^{-\gamma-1}V(t)dt\geq \int_0^TC't^{-\gamma-1+\alpha'}dt$. Therefore,  $t^{-\gamma}V(t)|_0^T=+\infty$ and $\gamma\int_0^Tt^{-\gamma-1}V(t)dt=+\infty$ when $\gamma\geq\alpha'$, and hence,
the considered integral is divergent when $\gamma\geq \alpha'$.
\\
The last part of the theorem is followed from Theorem \ref{Th1}.
\end{proof}

\begin{rem} The {\it integration index } of $f$ on $A$, is defined by
\[ i(f,A)=\sup\{\gamma: \int_A|f(x)|^{-\gamma}dx<+\infty\}.\]
By the Theorem \ref{Th1} and Theorem \ref{Th3}, $i(f,A)\in [\frac{1}d,\frac {n-k'}{d'}]$, for any polynomial function $f$.
{\color{black} In particular, $i(f,A)\in [\frac{n}{d},\frac{n-k'}{d'}]$, when $f$ satisfies the condition in Lemma \ref{Lemma3}.
}
Therefore, when the endpoints are equal, $i(f,A)$ is determined. But there are cases
where $i(f,A)>\frac{1}d$
See the following examples.
\end{rem}

\begin{example}\label{Ex3}
a) From Theorem \ref{Th3} and Example \ref{Ex1}, if $f$ be a polynomial of degree $d$ in $n$ variables and $A$ be a bounded set, $\dim f^{-1}(0) = n-1$ and there exists $ a\in f^{-1}(0)$ such that $\ord_a f = d$,  then in a bounded neighborhood $A$ of the origin we have
$\int_A|f(x)|^{-\gamma}dx<+\infty$ if and only if $\gamma<i(f,A)=\frac{1}d$.
\\
{\color{black}
b) Let $g(x,y) = x^2+y^2$, and $A = \{ (x,y)\in \mathbb{R}^2: x^2+y^2\leq 1\}$.
Then we have $d=\deg g = 2$, $f^{-1}(0) = \{(0,0)\}$ has dimension $0$.
Then, by Theorem \ref{Th1}, (iii), we have
$$t=t^\frac{n-k'}{d'}\leq V(t) \leq t^\frac{n}{d}=t.$$
By the above remark, we have $i(g,A) =1$.
}
\\
c) In the above Theorem \ref{Th3}, if $f^{-1}(0)\cap A =\emptyset$ or $\forall d' \in \mathbb{N}, \dim Z_{d'}\cap\text{int} A = \emptyset$, then $i(f,A)$ can be equal $\infty$.
\end{example}

\end{document}